\definecolor{mygray}{gray}{0.3} 
\newtheorem{theorem}{Theorem}
\newtheorem{proposition}[theorem]{Proposition}
\newtheorem{lemma}[theorem]{Lemma}
\newtheorem{corollary}[theorem]{Corollary}
\newtheorem{definition}{Definition}
\newcommand{\al}{\alpha}
\newcommand{\De}{\Delta}
\newcommand{\eps}{\epsilon}
\newcommand{\la}{\lambda}
\renewcommand{\phi}{\varphi}
\newcommand{\R}{\mathds{R}}
\newcommand{\C}{\mathds{C}}
\newcommand{\PP}{\mathbb{P}}
\newcommand{\E}{\mathbb{E}}
\newcommand{\Sp}{\mathrm{Sp}}
\newcommand{\W}{\mathscr{W}}
\newcommand{\Ov}{\mathscr{O}}
\newcommand{\be}{\begin{equation}}
\newcommand{\ee}{\end{equation}}
\newcommand{\m}{\mathfrak{m}}
\title{Dynamics of a rank-one perturbation\\ of a Hermitian matrix}
\author{Guillaume Dubach}
\address{DMA, École Normale Supérieure -- PSL, 45 rue d'Ulm, F-75230 Cedex 5 Paris, France}
\email{guillaume.dubach@ens.fr}
\author{László Erd\H{o}s}
\address{Institute of Science and Technology Austria, 3400 Klosterneuburg, Austria}
\email{laszlo.erdoes@ist.ac.at}
\keywords{Rank-one Perturbation; Eigenvalue Dynamics; Non-Hermitian Random Matrices}
\subjclass{Primary: 60B20, 15B52; Secondary: 47B93}
\begin{document}

\begin{abstract}
We study the eigenvalue trajectories of a time dependent matrix $ G_t = H+i t vv^*$ for $t \geq 0$, where $H$ is an $N \times N$ Hermitian random matrix and $v$ is a unit vector. In particular, we establish that with high probability, an outlier can be distinguished at all times $t>1+N^{-1/3+\eps}$, for any $\eps>0$. The study of this natural process combines elements of Hermitian and non-Hermitian analysis, and illustrates some aspects of the intrinsic instability of (even weakly) non-Hermitian matrices.
\end{abstract}

\maketitle

\begin{figure}[h!]
\includegraphics[width=\textwidth]{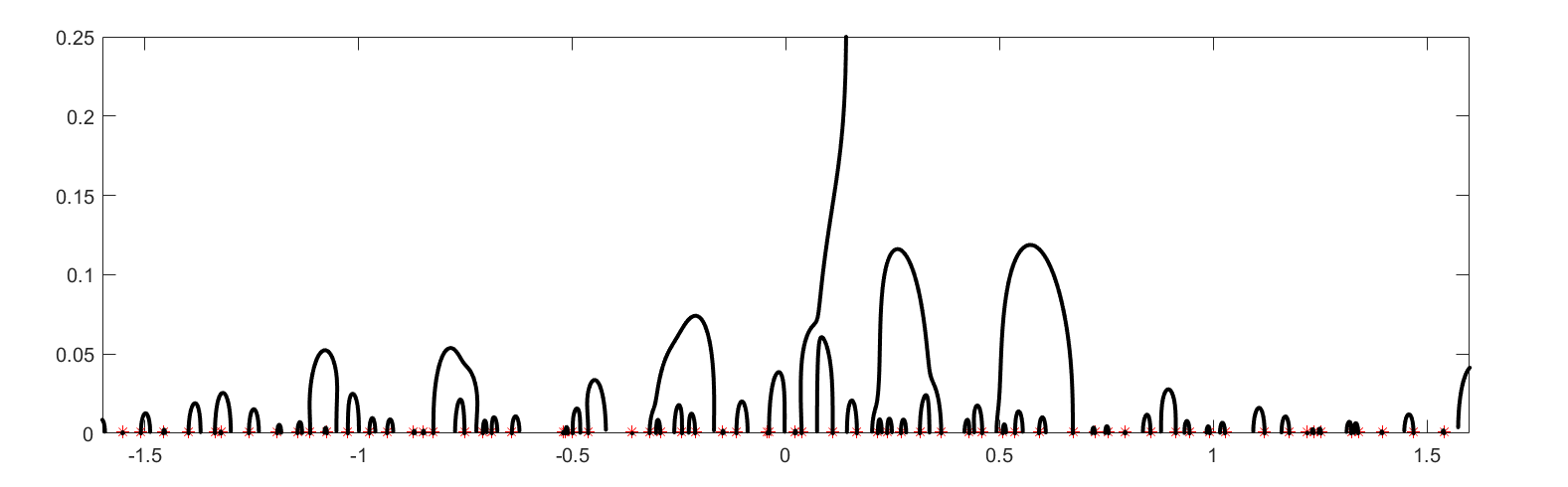}
\caption{Trajectories of bulk eigenvalues for $H$ a $100 \times 100$ GUE matrix.}
\end{figure}

\section*{Introduction}
Rank-one perturbations of random matrices appear naturally in a variety of contexts, an overview of which was recently provided by \cite{Forrester_Review}. An important example is the celebrated phenomenon of \textit{BBP transition} (for Baik-Ben Arous-Péché: see \cites{BBP_paper, Peche_BBP_GUE}), which arises when the perturbation is a positive rank-one Hermitian matrix, and so slightly `pushes' the spectrum to the right, to the point where one outlier is clearly separated from the bulk. Another example is that of non-Hermitian perturbations of a Hermitian matrix, which play an important role in scattering theory (for a general presentation of this application, see Chapter 34 of \cite{RMT_handbook} and \cite{FyodorovSommers1997}). The questions we consider here can be summarized as follows: what can be rigorously established about rank-one anti-Hermitian perturbations of a random Hermitian matrix considered dynamically, that is, when the coupling parameter is interpreted as time? And more specifically, what can be said about the emergence of an outlier? Questions of a dynamical nature have been considered early on in the physics litterature (e.g. \cite{Smolyarenko2003}), and the distribution of eigenvalues (and approximate location of the outlier) in such models has recently been the subject of much mathematical work (see for instance \cites{ORourkeWood2017, Rochet2017, Shcherbina2021}). However, the question of the exact timescale at which the outlier appears seems to have not been adressed until now. \medskip

Throughout the paper, the essential assumption is that $H$ be a random Hermitian matrix for which the uniform isotropic local law (Theorem \ref{thm:UILL}) is known to hold. For the sake of definiteness, say that we consider the Wigner ensemble with the following standard assumptions: entries $h_{ij}, i \leq j$ are independent, off-diagonal (resp. diagonal) entries are identically distributed with continuous distribution on $\C$ (resp. $\R$) such that $\E h_{ij}=0$, $\E |h_{ij}|^2=1/N$ and finite moments, i.e. $\E |\sqrt{N} h_{ij}|^p\le C_p$ for all $p$. These are the Wigner matrices with which we work by default -- although the method and results also hold under more general conditions, such as those of \cite{ErdoesKruegerSchroeder}. As the entries of $H$ are assumed to have a continuous distribution, it holds almost surely that $H$ has $N$ distinct real eigenvalues $\mu_1 < \dots < \mu_N$; and we denote by $(u_i)_{i=1}^N$ a choice of associated  normalized  eigenvectors. \medskip

We consider the following process, which is a rank-one perturbation of $H$:
\begin{equation}\label{def_system}
G_t := H + i t vv^*, \qquad t \in \R,
\end{equation}
where $v$ is a random unit vector, chosen uniformly on the sphere and independent of $H$. However, the randomness of $v$ is not a very relevant feature (as long as independence holds). The main results, indeed, are proved for any fixed $v$; only the proofs of some preliminary facts are greatly simplified when stating them with respect to the randomness in $v$. \medskip

It is straightforward to check that the eigenvalues of $G_t$ lie in the upper half-plane for $t>0$, and that $G_{-t}=G_t^*$, so that the eigenvalue trajectories for $t<0$ and $t>0$ are symmetric to the real axis. 
Another deterministic property is that, as $t \rightarrow \infty$, the spectrum is composed of one outlier that diverges ($\la_{j_{\text{out}}}(t) \approx it$) and $N-1$ eigenvalues that converge to specific locations on the real line.
Note that the distribution of eigenvalues at any fixed $t$ is known when $H$ is taken from an integrable ensemble such as GUE or GOE (see for instance \cites{FyodorovKhoruzhenko,FyodorovSommers}). However, the questions we ask here are of a dynamic nature: we are interested in the evolution of the spectrum $\{ \la_1(t), \dots, \la_N(t) \}$ of $G_t$ when $t \in [0, +\infty) $, and in particular in the emergence of a single outlier (Theorem \ref{thm:the_outlier}). The fact that indices can be given consistently to form $N$ continuous trajectories is a consequence of the non-intersection of trajectories (Theorem \ref{thm:non_crossing}); we choose these indices so that $\lambda_i(0)=\mu_i$. 
\medskip

Section \ref{first_prop_sec} introduces some basic properties of these dynamics, either deterministic or probabilistic. In particular, it is established that trajectories are almost surely non-crossing (Theorem \ref{thm:non_crossing}); moreover, they are everywhere differentiable and satisfy remarkable systems of differential equations of first and second order with singularities (Theorem \ref{thm:ode}). Seen from this angle, the system appears to be extremely unstable, so that an alternative approach is needed. \medskip

Section \ref{subtle_prop_sec} relies on the isotropic local law (Theorem \ref{thm:UILL}), borrowed from the existing literature on Hermitian random matrices, to give more precise high-probability estimates on these trajectories. Most importantly, we establish in Theorem \ref{thm:the_outlier} that the outlier is distinctly separated from the rest of the spectrum at all times $t > 1+N^{-1/3+\eps}$, with $\eps>0$. This timescale for the emergence of the outlier happens to coincide with the critical timescale of BBP transition. Inspired by the present work, Fyodorov, Khoruzhenko and Poplavskyi \cite{FyodorovGUE} provided clear evidence that this timescale is indeed optimal, when $H$ is GUE distributed, based upon an explicit formula for the density of the eigenvalues \cite{FyodorovSommers1996}. \medskip

A natural question that is left open is that of the origin of the outlier: from which eigenvalue $\mu_j$ of $H$ is this particular trajectory more likely to originate? In the context of a Hermitian perturbation, the answer is  trivial;
 for an anti-Hermitian perturbation it becomes very subtle. Heuristic arguments as well as numerical simulations seem to imply that the eigenvalues closer to the origin are much more likely to become the outlier when $t$ increases. However, the absence of a local law very near the spectrum 
prevents us from turning this phenomenology into a rigorous statement.

\subsection*{Notations and conventions}

We introduce the following standard definition.

\begin{definition}[High Probability] 
A sequence of events $(A_N)_{N \geq  1}$ is said to happen with high probability if for any $D>0$ the inequality
$$ 
\PP \left( A_N^c \right) < N^{-D}
$$
holds for sufficiently large $N$.
\end{definition}


It is customary, when working with Wigner  matrices, to define the function $m_{\text{sc}}$ on $\C \backslash [-2,2]$,
the Stieltjes transform of the Wigner semicircle distribution $\rho_{\text{sc}}(x) =\frac{1}{2\pi}\sqrt{4-x^2}$ on $[-2,2]$.
This function is the natural approximation of the resolvent on both the upper and lower half-planes.
Note that $m_{sc}$ has a jump discontinuity on $[-2,2]$ 
and $m_{\text{sc}}(\overline{z}) = \overline{m_{\text{sc}}(z)}$. 
In this paper we  need  a slight modification of this function on the lower half-plane that is holomorphic through $[-2,2]$,
i.e. we define
\be\label{Stieltjes_msc}
\mathfrak{m} (z) =\frac{-z + \sqrt{z^2-4}}{2}
\ee
which is holomorphic on $\C \backslash (-\infty,-2]\cup[2,+\infty)$ with the appropriate choice of branch-cut for the square root, such that $\Im ( \sqrt{z^2-4} ) >0$ for every $z$ in this domain. In particular, $\mathfrak{m}=m_{\text{sc}}$ on the upper-half plane. It is 
a solution to the equation $\mathfrak{m}(z)^2 + z \mathfrak{m}(z)+1 =0$, so that $\mathfrak{m}$ defines a bijection from $\C \backslash (-\infty,-2]\cup[2,+\infty)$ to its image with
\be\label{inverse_m}
z= -\frac{1+\mathfrak{m}(z)^2}{\mathfrak{m}(z)}.
\ee
We also define, for any $t>0$,
\be
t^* := t - \frac{1}{t}, 
\quad \text{such that} \quad 
\mathfrak{m}(it^*) = \frac{i}{t}.
\ee
It is a very important fact (for most results in Section \ref{subtle_prop_sec}) that the holomorphic function $\m(z)-i/t$ has only one zero at $z=it^*$, with multiplicity one; this zero being in the upper-half plane if and only if $t\geq 1$. The role of the quantity $t^*$ as an approximation to the resonant eigenvalue  was already noted, see for instance \cite[p.1950]{FyodorovSommers1997}.

\section*{Acknowledgments}
We would like to thank Paul Bourgade, Victor Dubach, Yan Fyodorov, and Boris Khoruzhenko for many useful remarks. \medskip

\noindent G. Dubach gratefully acknowledges funding from the European Union's Horizon 2020 research and innovation programme under the Marie Sk{\l}odowska-Curie Grant Agreement No. 754411. L. Erd\H{o}s is supported by ERC Advanced Grant “RMTBeyond” No. 101020331.

\newpage
\section{First properties of trajectories}\label{first_prop_sec}

In this section, we consider a fixed matrix $H$. The results are either deterministic, or stated with respect to $\PP_v$, the randomness in $v$, a uniform unit vector.

\subsection{Weighted resolvent and non-intersection of trajectories}

For now, one can assume that the indices of the eigenvalues $\la_j(t)$ of $G_t$ are given arbitrarily for each $t$. One of the goals of the following results is to establish that the indices can be given in a consistant way, with $\la_j(t)$ being `the' trajectory such that $\la_j(0)=\mu_j$.

\begin{definition}
We define the weighted resolvent associated to $H$ and the unit vector $v$ by
\begin{equation}\label{weighted_resolvent}
    \W (z) := \sum_{j=1}^N \frac{|\langle u_j | v \rangle|^2}{\mu_j - z} = \langle v | (H-z)^{-1} v \rangle.
\end{equation}
\end{definition}
The name \textit{weighted resolvent} refers to the fact that  $\W(z)$ can be considered as
a weighted sum with weights $|\langle u_j | v \rangle|^2$ that sum up to one. Since $\E_v |\langle u_j | v \rangle|^2 = \frac{1}{N}$,
where expectation is with respect to the uniform unit vector $v$, 
so $ \E_v \W(z) $ is the usual trace of the resolvent of $H$.


\begin{proposition}\label{level_lines}
For any $t \neq 0$,
\be
z \in \Sp \left( G_t \right)
\quad
\Leftrightarrow \quad \W(z)= \frac{ i }{ t }.
\ee
As a consequence, the trajectories of eigenvalues for the system (\ref{def_system}) are given by the zero level lines of the real part of the weighted resolvent $\W$:
\be
\bigcup_{j=1}^N
\{ 
\la_j(t) \ : \ \ t \in \mathbb{R}^*
\}
=
\{ 
z \in \C \backslash \mathrm{Sp} H \ : \ \Re \W(z) = 0
\}.
\ee
In particular,
\be\label{real_part_bound}
\forall j, t, \quad \mu_1 \leq \Re \la_j(t) \leq \mu_N
\ee
with equality happening only for $t=0$, or if $|\langle u_1 | v \rangle|^2=1$ (resp. $| \langle u_N | v \rangle|^2=1$).
\end{proposition}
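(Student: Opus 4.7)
The plan is to reduce the eigenvalue problem to a scalar equation via the rank-one structure of the perturbation. For $z \notin \Sp(H)$, I would factor
\begin{equation*}
G_t - z = (H - z)\bigl[I + it (H-z)^{-1} v v^*\bigr]
\end{equation*}
and invoke the matrix-determinant identity $\det(I + A v v^*) = 1 + v^* A v$ with $A = it (H-z)^{-1}$ to obtain
\begin{equation*}
\det(G_t - z) = \det(H-z)\bigl(1 + it \W(z)\bigr).
\end{equation*}
Since the first factor is nonzero, $z \in \Sp(G_t)$ iff $\W(z) = -1/(it) = i/t$. To make the characterization exhaustive, I would verify separately that no $\mu_j$ lies in $\Sp(G_t)$ when $t \neq 0$, at least in the generic case $\langle u_j, v \rangle \neq 0$ covered by our assumptions: a would-be eigenvector $w$ of $G_t$ at $\mu_j$ satisfies $(H - \mu_j) w = -i t \langle v, w \rangle v$, and projecting onto $u_j$ forces $\langle v, w \rangle \langle u_j, v \rangle = 0$, whence $\langle v, w \rangle = 0$ and then $w = 0$.

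The level-line statement is then an immediate corollary: as $t$ ranges over $\R^*$, the value $i/t$ sweeps out $i \R^*$, so the union of the spectra $\Sp(G_t)$ coincides, away from $\Sp(H)$, with $\{z : \W(z) \in i\R^*\}$; modulo the finitely many real zeros of $\W$ lying between consecutive $\mu_j$'s (which correspond to the $t \to \pm \infty$ limits of trajectories rather than to finite-$t$ eigenvalues), this is exactly the real-zero locus $\{z \in \C \setminus \Sp(H) : \Re \W(z) = 0\}$.

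For the real-part bound, I would write $z = x + iy$ and use
\begin{equation*}
\Re \W(x + iy) = \sum_{j=1}^N |\langle u_j, v \rangle|^2 \frac{\mu_j - x}{(\mu_j - x)^2 + y^2}.
\end{equation*}
If $x > \mu_N$, every numerator is strictly negative and every denominator positive, so $\Re \W(z) < 0$, contradicting $\Re \W(z) = 0$; symmetrically $x < \mu_1$ forces $\Re \W(z) > 0$. This proves $\mu_1 \leq \Re \la_j(t) \leq \mu_N$. For the equality case, say at $x = \mu_N$: all terms with $j < N$ still have strictly negative numerator (and positive denominator), so the sum can vanish only if $|\langle u_j, v \rangle|^2 = 0$ for every $j < N$, i.e.\ $|\langle u_N, v \rangle|^2 = 1$; the case $x = \mu_1$ is symmetric, and the remaining equality possibility is the trivial $t = 0$ one.

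I expect the only genuinely delicate point to be the non-coincidence of $\Sp(G_t)$ with $\Sp(H)$ for $t \neq 0$, which requires the auxiliary rank-one argument sketched above. The rest of the argument reduces to a straightforward sign analysis of a positive convex combination of Cauchy kernels $(\mu_j - x)/((\mu_j - x)^2 + y^2)$, and the level-line interpretation follows by merely observing that the characteristic equation $\W(z) = i/t$ is parametrised by $i \R^*$.
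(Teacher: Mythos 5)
Your proof follows the same route as the paper: the matrix determinant identity giving $\det(G_t - z) = \det(H-z)(1+it\W(z))$, and the subsequent sign analysis of $\Re\W$ as a nonnegative combination of Cauchy kernels. You are in fact a bit more careful than the paper in two places: you explicitly check that $\mu_j \notin \Sp(G_t)$ for $t\neq 0$ (which the paper leaves implicit), and you correctly flag that the stated set equality for the level lines holds only modulo the $N-1$ real zeros of $\W$, which lie in $\{\Re\W = 0\}\setminus\Sp(H)$ but are the $t\to\pm\infty$ limit points of trajectories rather than eigenvalues at finite $t$; the paper asserts the equality without addressing this minor discrepancy.
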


\begin{proof}
For any $z \notin \R$, the matrix $(H-z)^{-1} vv^*$ has rank $1$, and in particular its trace $\W(z)$ is its only non-zero eigenvalue. We write:
\be
\det (G_t -z )
= 
\det (H-z) \det\left(I_N + it (H-z)^{-1} vv^* \right)
=
\det (H-z) \left(1 + it \W(z) \right).
\ee
The result follows, as $ z \in \Sp (G_t)$ is  equivalent to $1 + it \W(z)=0$. This is equivalent to $z = E+i \eta$ being a solution to the equation
\begin{equation}\label{algebraic_variety}
    \sum_{j=1}^N |\langle u_j | v \rangle|^2 \frac{\mu_j - E}{(\mu_j -E)^2 + \eta^2} = 0.
\end{equation}
and \eqref{real_part_bound} follows by inspection, all terms having the same sign outside the vertical strip $\{\mu_1 \leq \Re z \leq \mu_N\}$.
\end{proof}

We now prove that the trajectories almost surely do not cross. This is in fact true for any $N$; we give below a concise argument that requires $N \geq 5$, which is enough for our purpose, as subsequent results concern large values of $N$.

\begin{theorem}\label{thm:non_crossing} We assume $N \geq 5$, and that $H$ has distinct real eigenvalues $\mu_1 < \dots < \mu_N$. Almost surely with respect to $\PP_v$, the randomness in $v$, the trajectories of the system \eqref{def_system} do not intersect, nor do they self-intersect: that is, for all $i \neq j$,
\be 
\PP_v \left( 
\exists t,s \geq 0, \ \la_i (t) = \la_j (s)
\right)
= 0 
\quad \& \quad
\PP_v \left( 
\exists t,s \geq 0, t \neq s, \ \la_j (t) = \la_j (s)
\right)
= 0 .
\ee
\end{theorem}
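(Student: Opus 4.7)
The first move is a deterministic reduction via Proposition \ref{level_lines}. If $\la_i(t) = \la_j(s) =: z$ with $t, s > 0$, then $\W(z) = i/t$ and $\W(z) = i/s$, forcing $t = s$. Hence self-intersection at distinct times is deterministically impossible, and cross-intersection ($i \neq j$) collapses to the single event: for some $t > 0$, the matrix $G_t$ admits a repeated eigenvalue. The boundary cases involving $t = 0$ or $s = 0$ are handled by observing that, with full $\PP_v$-measure, one has $\langle u_j, v\rangle \neq 0$ for every $j$, and consequently the eigenvalues of $G_t$ lie strictly in the open upper half-plane whenever $t > 0$.

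The next step is the analytic characterization of repeated eigenvalues. Since $\det(G_t - z) = \det(H - z)(1 + it\W(z))$ and $\det(H - z) \neq 0$ for $\Im z > 0$, differentiating shows that $z$ is a double eigenvalue of $G_t$ precisely when $\W(z) = i/t$ and $\W'(z) = 0$. The theorem therefore reduces to: almost surely in $v$, no critical point $z_*$ of the meromorphic function $\W$ satisfies $\Re \W(z_*) = 0$ together with $\Im \W(z_*) > 0$.

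I then reparameterize by the weight vector $w = (|\langle u_j, v\rangle|^2)_j \in \Delta^{N-1}$, onto which the uniform law of $v$ pushes to a Dirichlet density. The critical points of $\W_w(z) = \sum_j w_j/(\mu_j - z)$ are the zeros of a polynomial of degree at most $2(N-1)$ in $z$ whose coefficients are polynomial in $w$; on the open set $\Omega$ where these zeros are simple (the complement being a proper discriminant locus of measure zero), they are locally real-analytic functions $z_*(w)$. For each local branch I differentiate $\phi(w) := \Re \W_w(z_*(w))$ along a direction $e$ tangent to the simplex ($\sum_j e_j = 0$). The chain rule combined with $\W_w'(z_*) = 0$ kills the contribution from the movement of $z_*$ and yields
\begin{equation*}
\partial_e \phi(w) \;=\; \Re \sum_j \frac{e_j}{\mu_j - z_*(w)}.
\end{equation*}
If $\partial_e \phi$ vanished for every admissible $e$, then $\Re[1/(\mu_j - z_*)]$ would be independent of $j$, a quadratic condition in $\mu_j$ admitting at most two solutions and thus incompatible with $N \geq 5$ distinct $\mu_j$'s. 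Hence $\phi$ has nonzero gradient on $\Omega$, its zero set is a real-analytic hypersurface of measure zero, and one concludes by taking a finite union.

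The main obstacle I expect is the global treatment of the multi-valued map $w \mapsto z_*(w)$: as $w$ traverses loops in $\Omega$ the critical points may be permuted, so individual branches of $\phi$ are not globally well-defined functions. The cleanest workaround is to replace $\phi$ by the single-valued real-analytic function $\Phi(w) := \prod_k \Re \W_w(z_*^{(k)}(w))$ ranging over all critical points in the upper half-plane, which vanishes exactly on the bad set; the directional derivative of any one factor above supplies a witness that $\Phi$ is not locally constant, and the conclusion follows from real-analyticity of $\Phi$ together with the measure-zero contribution of $\Delta^{N-1}\setminus\Omega$.
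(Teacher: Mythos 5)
Your proof is correct in its essentials but takes a genuinely different route from the paper. Both proofs perform the same initial reduction: by Proposition~\ref{level_lines}, $\la_i(t)=\la_j(s)$ at distinct nonzero times forces $t=s$, so the only non-trivial case is a repeated eigenvalue of $G_t$ at a single time $t>0$, which is equivalent to the two conditions $\Re\W(z)=0$ and $\W'(z)=0$ holding simultaneously at some $z\notin\R$. From there the arguments diverge. The paper works at \emph{fixed} $z$: it introduces the three vectors $Y_1,Y_2,Y_3$, proves by a B\'ezout count (a circle and a conic meet in at most $4$ points, whence the hypothesis $N\ge 5$) that $Y_1(z)\notin\mathrm{Span}_\R(Y_2(z),Y_3(z))$, deduces that $\PP_v(\Re\W(z)=0\mid\W'(z)=0)=0$, and then passes from fixed $z$ to the finitely many \emph{random} critical points $Z_k$ by a random-permutation device. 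Your proof instead parameterizes by the weight vector $w\in\Delta^{N-1}$ and treats the critical points $z_*(w)$ as local real-analytic functions via the implicit function theorem; the key computation is that, thanks to the cancellation $\W_w'(z_*)=0$, the tangential gradient of $\phi(w)=\Re\W_w(z_*(w))$ is $\Re\sum_j e_j/(\mu_j-z_*)$, and that this cannot vanish for all $e$ with $\sum e_j=0$ because $\Re\,[1/(\mu_j-z_*)]=c$ is a quadratic equation in $\mu_j-E$ with at most two roots — which already rules the case out for $N\ge 3$. This is a nice gain: your argument is sharper in $N$, and it sidesteps the conditioning-on-a-null-event subtlety in the paper's proof. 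On the other hand, it relies on the assertion that the discriminant locus $\Delta^{N-1}\setminus\Omega$ (where critical points of $\W_w$ collide) is a \emph{proper} subvariety of the simplex and hence of measure zero; this is true and standard, but you state it without justification, and it is the one point I would ask you to fill in (e.g.\ exhibit a single $w$ in the open simplex with simple critical points, or argue via the $N=2$ sub-case, to see the discriminant is not identically zero in $w$). A minor stylistic remark: the $\Phi=\prod_k\phi_k$ device at the end is more machinery than needed; since each local branch has a nonvanishing gradient everywhere, each $\{\phi_k=0\}$ is locally a smooth hypersurface, and the bad set is a finite union of these over a countable cover of $\Omega$, hence Lebesgue-null, without invoking real-analyticity or products over branches.
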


\begin{proof}
If for some $j_1, j_2$, $\la_{j_1} (t) = \la_{j_2} (s) = z$, then by Proposition \ref{level_lines},
\be\label{equality}
\W(z) = \frac{ i }{ t } = \frac{ i }{ s }
\ee
and so $t=s$ (if $t$ or $s=0$, \eqref{equality} is understood as meaning that $z$ is a pole of $\W$). This rules out self-intersection, as well as intersection of two distinct trajectories for different times. Intersection of two trajectories at $t=0$ is ruled out by the eigenvalues of $H$ being distinct. The remaining possibility is that two trajectories intersect at the same time $t>0$, which implies that $\W - i/t$ vanishes at $z \notin \R$ with multiplicity at least $2$; in particular $\W'(z)=0$. This is to say that an intersection point $z$ is such that the two conditions
\begin{equation}\label{two_conditions}
\Re \W (z) = 0
\quad \& \quad
\W ' (z) = 0
\end{equation}
are met. We will prove that almost surely, there is no $z$ that checks both conditions. First, note that
\be
\W'(z) = \sum_{j=1}^{N} \frac{ |\langle u_j | v \rangle |^2 }{ (\mu_j - z)^2 }
\ee
and so the condition $\W'(z) = 0$ is equivalent to $z$ being the root of a real polynomial of degree $2N-2$ (almost surely) that does not vanish on the real line. There is a finite number of such points: namely, almost surely $N-1$ conjugated pairs that we denote $(Z_k,\overline{Z_k})_{k =1}^{N-1}$, with $\Im Z_k >0$, counted with multiplicity.
For any $z \notin \R$, let us define the real vectors:
\begin{equation}\label{three_real_vectors}
Y_1(z) := \left( \Re \frac{1}{ \mu_j -z } \right)_{j=1}^N, \quad
Y_2 (z) := \left( \Re \frac{1}{ (\mu_j -z)^2 } \right)_{j=1}^N, \quad
Y_3 (z) := \left( \Im \frac{1}{ (\mu_j -z)^2 } \right)_{j=1}^N,
\end{equation}
and notice that the first condition of (\ref{two_conditions}) can be written as
\begin{equation}\label{first_condition}
\langle X \ | \ Y_1(z) \rangle = 0     
\end{equation}
where $X := (|\langle u_j | v \rangle |^2)_{j=1}^N$, and the second one similarly as
\begin{equation}\label{second_condition}
\langle X \ | \ Y_2(z) \rangle =0, \qquad  \langle X \ | \ Y_3(z) \rangle =0. 
\end{equation}
We will rely on the (deterministic) fact that $Y_1(z)$ is not in the $\R$-span of $Y_2(z), Y_3(z)$ for any $z \in \C \backslash \R$.

\begin{lemma}
Assuming $N \geq 5$, for any $z = E+i \eta \notin \R$,
$
Y_1(z) \notin \mathrm{Span}_{\R} (Y_2(z), Y_3(z)).
$
\end{lemma}

\begin{proof}
For any $z = E+i \eta$ with $\eta \neq 0$, we denote
\be
a_j + i b_j = \frac{1}{\mu_j - z}.
\ee
In particular, every pair $(a_j, b_j)$ solves
\begin{equation}\label{circle_eta}
b_j = \eta (a_j^2 + b_j^2)
\end{equation}
which is the equation of the circle
$\mathscr{C}_{\eta}$ with center $\frac{i}{2\eta}$ and radius $\frac{1}{2\eta}$, in the $(a,b)$ plane. Moreover, the $\mu_j$'s being distinct, $(a_j,b_j)$ are $N$ distinct points on $\mathscr{C}_\eta$. \\ \\
Moreover, assuming that $Y_1(z) = \al Y_2(z)+\beta Y_3(z)$ for some $\al, \beta \in \R$, we have
\begin{equation}\label{hyperbola}
a_j = \al (a_j^2 - b_j^2) + 2 \beta a_j b_j ,
\end{equation}
which is the equation of a hyperbola $\mathscr{H}_{\al, \beta}$ (including the degenerate case, that yields a union of two lines). By general theory (e.g. Bézout's theorem for curves), we have
\begin{equation}\label{Bezout_curve}
N \leq  | \mathscr{C}_{\eta} \cap \mathscr{H}_{\al,\beta} | \leq 4,
\end{equation}
which is a contradiction.
\end{proof}

For any fixed $z \in \C \backslash \R$, conditionally on (\ref{second_condition}), the probability that (\ref{first_condition}) holds as well is zero, as $Y_1(z)$ is linearly independent of $Y_2(z), Y_3(z)$ and $X$ has a continuous distribution on the simplex 
$$ \left\{ (x_1, \dots, x_N) \ : \ \ x_j \geq 0 , \ \sum_{j=1}^N x_j =1 \  \right\}.$$
So what the above argument allows to conclude is that
\be
\forall z \in \C \backslash \R, \
\PP_v \left( \Re \W(z) = 0 \ | \ \W'(z)=0 \right) = 0.
\ee
Considering that there are finally many points $(Z_i, \overline{Z}_i)_{i=1}^{N-1}$ such that $\W'(z)=0$, and that these can be assumed to be exchangeable (for instance, by reshuffling their indices by a uniform random permutation), it follows by classical probabilistic arguments that
\be\label{no_z_exists}
\PP_v \left( \exists z \in \C \backslash \R, \ 
\Re \W (z) = 0
\ \& \
\W ' (z) = 0  \right) = 0.
\ee
which concludes the proof.
\end{proof}

An important consequence of Theorem \ref{thm:non_crossing} is the possibility of choosing an coherent indexation such that each $\la_j(t)$ is a (uniquely defined) distinct continuous trajectory. We now study the deterministic behavior of these trajectories.

\subsection{Deterministic evolution}

The main features of these $N$ almost surely non-crossing continuous trajectories are as follows:
\begin{enumerate}
\item[(i)] At $t=0$, all eigenvalues are real.
\item[(ii)] For $t>0$, all eigenvalues are in the upper-half plane.
\item[(iii)] When $t \rightarrow \infty$, one eigenvalue (`the outlier' $\la_{j_{\text{out}}}$) diverges with $\Im \la_{j_{\text{out}}} \xrightarrow[t \rightarrow \infty]{} \infty$ and bounded real part, and the rest of the spectrum converges to $N-1$ distinct points on the real line. 
\end{enumerate}
The first two properties immediately follow from the definition of $G_t$. The last one can be easily established by the Schur complement identity, that also allows to identify the limit points as the eigenvalues of the projection of the operator $H$ on the space orthogonal to $v$. Another remarkable deterministic fact is that this evolution of eigenvalues can be described by two closed systems of differential equations: indeed, both first and second derivatives can be expressed in terms of lower order terms, as we now state.

\begin{theorem}[First and Second Order Differential Equations]\label{thm:ode} Let $H$ be a Hermitian matrix with simple eigenvalues $(\mu_j)_{j=1}^N$ and associated unit eigenvectors $(u_j)_{j=1}^N$. The evolution of the eigenvalues $(\la_j(t))_{j=1}^N$ of $G_t = H+i t vv^*$ can be described by the following closed system of equations, as long as the eigenvalues are distinct\footnote{This is almost surely the case when $v$ is a random unit vector, by Theorem \ref{thm:non_crossing}.}. The initial condition is $\la_j(0) = \mu_j $. For $t=0$, one has
\begin{equation}\label{initial_push}
\la_j '(0) = i |v^* u_j |^2 ,
\end{equation}
and for $t>0$, 
\begin{equation}\label{evolution_eq}
\la_j '(t) 
= \frac{i \Im \la_j(t)}{t} \prod_{k \neq j} \frac{\la_j(t) - \overline{\la_k}(t)}{\la_j(t) - \la_k(t)}.
\end{equation}
Moreover, the following second order equation holds, for $t>0$:
\begin{equation}\label{second_order}
\la''_j (t) = 2 \la'_j (t) \sum_{k \neq j} \frac{\la'_k(t)}{\la_j - \la_k}.
\end{equation}
\end{theorem}
Note that the product in the right hand side of \eqref{evolution_eq} is exactly the value of the diagonal overlaps $\Ov_{jj}$ (see formula (11) in \cite{FyodorovMehlig}). Remarkably, the second order equation \eqref{second_order} is valid more generally for the eigenvalues of $G_t^{(\theta)} = H+ e^{i \theta} t vv^*$ with any $\theta$ including the fully Hermitian case $\theta=0$; the proof is the same.


\begin{proof}
If the row vectors $L_i$ and the column vectors $R_j$ are respectively the left and right eigenvectors of $G_t$, chosen with the biorthogonality condition
\begin{equation}\label{bi_orthog}
L_i R_j = \delta_{ij},
\end{equation}
which is always possible when the corresponding eigenvalues are distinct, and if we call $X$ the matrix with columns $R_1, \dots, R_N$ and $Y$ the matrix with rows $L_1, \dots, L_N$, then it follows in particular that:
\begin{equation}
Y X =I, \quad G_t X = X \Delta, \quad Y G_t = \Delta Y .
\end{equation}
where $\Delta = \rm{Diag}(\lambda_1, \dots, \lambda_N)$. Differentiating with respect to $t$ yields
\begin{equation}\label{diff_1}
\Delta' = Y G_t' X + [\De, Y X'].
\end{equation}
In the present case, $G'_t = i v v^*$, and so equation (\ref{diff_1}) gives, on the diagonal,
\begin{equation}\label{first_derivative1}
\la_j ' (t)  =  L_j  G'_t  R_j 
= i  L_j  v  v^* R_j .
\end{equation}
For $t=0$, $G_0=H$ and $R_j=L_j=u_j$, so that equation (\ref{initial_push}) follows. For $t>0$, we first notice that the quantity $ L_1  v v^* R_1 $ is invariant under a unitary change of basis, so we can compute it for a Schur form of $G_t$. As this Schur form $T$ is conjugated to $G_t$ by a unitary change of variable, we have 
\be 
T = \widetilde{H} + it \tilde{v} \tilde{v}^*
\ee
with $\widetilde{H} = U H U^*$, $\tilde{v} = Uv$. We will simply continue to denote these by $H$ and $v$ in order to not overload notations. 
As $T$ is upper-triangular, we have
\be 
\forall a<b, \quad T_{ba} =  H_{ba} + i t v_b \overline{v_a} =0,
\ee
which implies
\be\label{columns_T}
\forall a<b, \quad  T_{ab} 
= H_{ab} + it v_a \overline{v_b}
= \overline{H}_{ba} + it v_a \overline{v_b} 
= 2it v_a \overline{v_b},
\ee
and on the diagonal, $T_{aa} = \la_a = H_{aa} + it |v_a|^2$, which implies
\be\label{real_im_part}
\Re \la_a = H_{aa}, \qquad \Im \la_a = t |v_a|^2.
\ee
This Schur form can be chosen so that any given eigenvalue is the first on the diagonal, and so we work now with $\la_1$ 
(i.e. we prove \eqref{evolution_eq} for $j=1$) without loss of generality. Another consequence of $T$ being triangular is that $R_1=e_1$ and so $ v^* R_1 = \overline{v_1}$. We now compute $ L_1 v $; in the following argument, we denote $L_1 = (1,\ell_2, \dots,\ell_N)$, and $L_1^{(d)}=(1,\ell_2, \dots, \ell_d)$ so that $L_1=L_1^{(N)}$; similarly $v^{(d)}$ stands for $(v_1, \dots, v_d)$. The numbers $\ell_k$ satisfy a simple recursion, which follows from the definition of $L_1$, and $T$ being triangular. Together with \eqref{columns_T}, this gives
\be\label{Schur_rec}
\ell_{k+1} = \frac{1}{\la_1 - \la_{k+1}}  L_1^{(k)} \tau_{k+1}
 = \frac{2 i t \overline{v_{k+1}}}{\la_1 - \la_{k+1}}  L_1^{(k)} v^{(k)},
\ee
where $\tau_{k+1}$ is the column vector of the first $k$ entries of the $(k+1)$th column of $T$. The recursion for $ L_1 v $ is then initiated by
$$
 L_1^{(1)} v^{(1)} = v_1,
$$
and continued in the following way:
\begin{align*}
 L_1^{(k+1)} v^{(k+1)} & =  L_1^{(k)} v^{(k)} +\ell_{k+1} v_{k+1} \\
& = L_1^{(k)} v^{(k)} + \frac{2 i t |v_{k+1}|^2}{\la_1 - \la_{k+1}} L_1^{(k)}v^{(k)} \\
& = L_1^{(k)} v^{(k)} \left( 1 + \frac{2 i t |v_{k+1}|^2}{\la_1 - \la_{k+1}} \right)
\end{align*}
where we replaced $\ell_{k+1}$ using (\ref{Schur_rec}). 
Eq. (\ref{real_im_part}) gives us $2 i t |v_{k+1}|^2 = \la_{k+1} - \overline{\la_{k+1}}$, so that
\begin{equation}
    L_1^{(k+1)} v^{(k+1)}
    = L_1^{(k)} v^{(k)} \left( 1 + \frac{\la_{k+1} - \overline{\la_{k+1}}}{\la_1 - \la_{k+1}} \right)
    = L_1^{(k)} v^{(k)}  \frac{\la_{1} - \overline{\la_{k+1}}}{\la_1 - \la_{k+1}},
\end{equation}
and finally
$$
 L_1 v  =  L_1^{(N)}  v = v_1 \prod_{k=2}^N \frac{\la_{1} - \overline{\la_{k}}}{\la_1 - \la_{k}}.
$$
It now follows from~\eqref{first_derivative1} that
$$
\la_1 ' (t) = i L_1  v v^* R_1  = i |v_1|^2  \prod_{k=2}^N \frac{\la_{1} - \overline{\la_{k}}}{\la_1 - \la_{k}},
$$
and eq. (\ref{real_im_part}) allows us to obtain the equation (\ref{evolution_eq}), that is a function of eigenvalues only, valid for any $t>0$. 

In order to prove of \eqref{second_order}, we look at the off-diagonal terms of \eqref{diff_1}:
\begin{equation}\label{diff_1_off_diag}
\forall i \neq j, \ 
(\la_i - \la_j) L_i {R}_j' = - L_i G_t' R_j
\end{equation}
So that, expressing the derivative of right (resp. left) eigenvectors in the basis of the right (resp. left) eigenvectors,
\begin{equation}\label{eigenvector_derivatives}
{R}_j' = \sum_{k=1}^N \al_{j,k} R_k, \quad \& \quad
{L}_j' = \sum_{k=1}^N \beta_{j,k} L_k
\end{equation}
we find 
\begin{equation}
\forall k \neq j, \quad
\al_{j,k} = L_k {R}_j' =  \frac{1}{\la_j - \la_k} (L_k G_t' R_j)
\end{equation}
and
\begin{equation}
\forall k \neq j, \quad
\beta_{j,k} = {L}_j' R_k = - L_j {R}_k'  = \frac{1}{\la_j - \la_k} (L_j G_t' R_k)
\end{equation}
we also note that as $L_k R_k = 1$, $\beta_{kk} = L_k'R_k = - L_k R_k' = -\al_{kk}$,
and so
$$
{R}_j' = \sum_{k \neq j} \frac{1}{\la_j - \la_k} (L_k G_t' R_j) R_k + \al_{jj} R_j
\quad \& \quad
{L}_j' = \sum_{k \neq j} \frac{1}{\la_j - \la_k} (L_j G_t' R_k) L_k - \al_{jj} L_j
$$
Differentiating eq. (\ref{diff_1}) on the diagonal using that  $G_t' = i v v^*$, $G_t''=0$:
\begin{align*}
\la_j''(t) & = i L_j ' v v^* R_j +  i L_j v v^* R_j' 
=  i \sum_{k \neq j} \left(\beta_{j,k} (L_k v v^* R_j) + \al_{j,k} (L_j v v^* R_k) \right) \\
& = \sum_{k \neq j} \frac{2}{\la_j - \la_k} (i L_j v v^* R_k)(i L_k v v^* R_j)\\
& = \sum_{k \neq j} \frac{2}{\la_j - \la_k} (i  L_j v v^* R_j)(i  L_k v v^* R_k)
= 2 \la_j ' (t) \sum_{k \neq j} \frac{\la_k'(t) }{\la_j - \la_k},
\end{align*}
which is \eqref{second_order}.
\end{proof}

\section{Properties of trajectories via isotropic local law}\label{subtle_prop_sec}

In this section we state a few estimates on the trajectories of the system \eqref{def_system}, based on the approximation of the weighted resolvent $\W(z)$. The unit vector $v$ is assumed to be fixed in this entire section, and all results are stated with respect to $\PP_H$, the randomness of $H$, a Wigner matrix.

We let $T \geq 2$ be an arbitrary fixed constant; the small time ($t \leq T$) and large time ($t>T$) behavior will be analysed somewhat differently.

For any $\zeta, L>0$, we consider the following spectral domains:
\be\label{S_strip}
\mathscr{S}_{\zeta} := \{ z=E+i\eta \in \C \ : \ |E| < 3, \  N^{-1+\zeta} \leq \eta < N^{100} \},
\ee
\be\label{S_L_strip}
\mathscr{S}_{\zeta,L} := \{ z=E+i\eta \in \C \ : \ |E| < 3, \  N^{-1+\zeta} \leq \eta < L \},
\ee
and
\be\label{R_rectangle}
\mathscr{R}_{ \zeta} :=  \{ z=E+i\eta \in \C \ : \ |E| < 3, \ 0 \leq \eta < N^{-1+\zeta}\}.
\ee 

The essential input in this section is the uniform isotropic local law, taken over from the existing literature. Local laws in general aim at approximating the resolvent by some deterministic quantity ($m_{\text{sc}}$ times the identity matrix for Wigner matrices); `isotropic' refers  to scalar products $\langle v, (H-z)^{-1}v\rangle$ with some fixed vector $v$, i.e. to the weighted resolvent $\W$, and `uniform' refers to uniformity in the parameter $z$. An isotropic local law, for a given $z$ in a bounded domain, was first given in \cite{KnowlesYin2013}.  We need its following version: 

\begin{theorem}[Uniform Isotropic Local Law]\label{thm:UILL}
For any $\zeta, \eps, D>0$ and fixed unit vector $v$,
\be\label{UILL_bound}
\PP_H \left(
\exists z \in \mathscr{S}_\zeta, \
|\W(z) - \mathfrak{m}(z)| > \frac{N^{\eps}}{\sqrt{N\eta} (1+\eta^2)^{3/4}} 
\right)
< N^{-D}.
\ee
\end{theorem}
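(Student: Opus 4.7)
The plan is to derive this uniform-in-$z$ local law from its pointwise counterpart by standard techniques: a lattice plus Lipschitz extension for the portion of $\mathscr{S}_\zeta$ with moderate $\eta$, and a resolvent expansion at infinity for the portion with large $\eta$. The deep input I would import is the pointwise isotropic local law of Knowles--Yin~\cite{KnowlesYin2013} (in the generality of~\cite{ErdoesKruegerSchroeder} if needed): for any fixed $z_0 = E_0+i\eta_0$ with $|E_0|\le 3$ and $\eta_0\in[N^{-1+\zeta},10]$,
\[
|\W(z_0)-\mathfrak{m}(z_0)|\le \frac{N^{\eps/2}}{\sqrt{N\eta_0}(1+\eta_0^2)^{3/4}}
\]
holds with probability at least $1-N^{-D'}$ for any prescribed $D'>0$. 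The proof of this pointwise statement is the genuinely hard step; it relies on Schur complement identities, stability of the Wigner self-consistent equation, and fluctuation averaging, and is imported verbatim.

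To upgrade to uniformity on the bounded strip $\mathscr{S}_{\zeta,10}$, I cover it by a lattice $\mathcal{N}$ of spacing $N^{-K}$ for a large integer $K$, so that $|\mathcal{N}| \le N^{2K+1}$. Applying the pointwise bound with $D'=D+2K+2$ and a union bound controls $\W-\mathfrak{m}$ at every lattice point simultaneously. For arbitrary $z$ in the strip I pick the nearest $z_0\in\mathcal{N}$ and interpolate, using (differentiating~\eqref{weighted_resolvent})
\[
|\W'(z)| \le \sum_j \frac{|\langle u_j,v\rangle|^2}{|\mu_j-z|^2} \le \frac{1}{\eta^2} \le N^{2(1-\zeta)},
\]
together with the elementary $|\mathfrak{m}'(z)|\le C/\eta$; choosing $K$ large enough depending on $\zeta,\eps,D$ keeps the interpolation error well below the right-hand side of~\eqref{UILL_bound}.

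For the unbounded range $\eta \ge 10$ I proceed via the Neumann expansion around $z=\infty$: on the high-probability event $\{\|H\|\le 3\}$,
\[
\W(z) - \mathfrak{m}(z) \;=\; -\sum_{k\ge 1}\frac{\langle v,H^k v\rangle - s_k}{z^{k+1}},
\]
where $s_k$ denotes the $k$th moment of $\rho_{\mathrm{sc}}$ and the $k=0$ contribution cancels since $\|v\|^2 = s_0 = 1$. Standard Wigner concentration gives $|\langle v,H^k v\rangle - s_k|\le N^{-1/2+\eps/2}$ with high probability for each bounded $k$ (this can itself be extracted from the pointwise local law at a constant-height contour via $\langle v,H^k v\rangle - s_k = -\tfrac{1}{2\pi i}\oint z^k(\W-\mathfrak{m})\,dz$); the tail $k > K_0 := c\log N$ is majorized deterministically by $(3/\eta)^{K_0}$, negligible for $\eta\ge 10$. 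Summing yields a total at most $CN^{-1/2+\eps}/\eta^2$, which is dominated by the right-hand side of~\eqref{UILL_bound}.

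The main obstacle is the pointwise isotropic local law itself, which is the subject of substantial monographs; everything beyond it---lattice discretization, Lipschitz interpolation, and the expansion at infinity---is routine bookkeeping, whose only delicate point is balancing the exponents $K$ and $D'$ against the target $D$.
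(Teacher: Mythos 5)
Your proposal is correct and matches the paper's proof in its essential structure: both import the pointwise isotropic local law from the literature and upgrade it to uniformity via a polynomial lattice together with Lipschitz (derivative) bounds on $\W$ and $\mathfrak{m}$. The one difference is that you split off the range $\eta \ge 10$ and treat it by a Neumann expansion at infinity, whereas the paper simply invokes the lattice-plus-Lipschitz argument without comment on the very large $\eta$ regime (up to $N^{100}$). Your explicit expansion is a slightly cleaner way to handle that tail, though a lattice with scale-dependent mesh and the improving pointwise bound would also do the job; this is a matter of bookkeeping rather than a genuinely different approach. One minor remark: your bound $|\mathfrak{m}'(z)|\le C/\eta$ is not tight (near $\pm 2$ the correct rate is $C/\sqrt{|z^2-4|}\lesssim C/\sqrt{\eta}$), but it is valid for $\eta<1$ and conservative, so the interpolation step still closes. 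The paper also cites the specific references for bulk ($|\Re z|\le 2-\epsilon$, from the slow-correlation-decay paper) and edge (from the band-rigidity paper) separately; your plan treats the pointwise input as a black box without distinguishing these regimes, which is fine for a proof plan but worth flagging since the edge estimate is the more delicate of the two.
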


\begin{proof} 
In the bulk spectrum, $|\Re z|\le 2-\epsilon$, this result was stated in Thm 2.1,  eq. (2.6a) and (2.7a) of \cite{ErdoesKruegerSchroeder} even for much more general Hermitian random matrices with possibly correlated entries. The edge regime was settled in Eq. (2.6a) of \cite{AltErdoesKruegerSchroeder}, where the optimal bound is in fact slightly better
than~\eqref{UILL_bound}. 
Together, these references provide an isotropic local law for any fixed $z \in \mathscr{S}_{\zeta}$. Uniformity in $z$ can be achieved by 
Lipschitz continuity of the functions at stake and using a dense grid of fixed spectral parameters. 
\end{proof}

We will use this isotropic local law together with the following classical theorem.

\paragraph{Rouché's Theorem:}
\textit{
Let $f$  and $g$ be two holomorphic functions on a domain $\Omega\subset\C$ with closed and simple boundary $\partial \Omega$. If $|f(z)-g(z)|<|g(z)|$ on $\partial \Omega$, then $f$ and $g$ have the same number of zeros in $\Omega$, counted with multiplicity.
} \medskip

The following proofs all have in common that we determine a domain on which the inequality $|f-g| < |g|$ holds, with $f= \W - i/t$ and $g=\m-i/t$. The conclusion then either follows immediately, as this strict inequality clearly prevents $f=0$ on the relevant domain, or by applying Rouché's theorem on a Jordan curve: the (deterministic) zeros of $g$ being known, this allows us to draw some conclusions as to the (random) zeros of $f$.

\begin{theorem}\label{thm:elliptic_region}
For any $\eps, \zeta >0$, it holds with $\PP_H$-high probability that all eigenvalues $\lambda_i(t)$, for any time $t \in (0,T)$, lie in the domain $\mathscr{E}_{t, \eps} \cup \mathscr{R}_{\zeta}$, where
\be\label{elliptic_curve}
\mathscr{E}_{t, \eps} := \{ z=E+i\eta \in \C \ : \
E^2 + \left(\eta - t^* \right)^2 < \frac{N^{\eps}}{N \eta}, \ |E| < 3, 0 \leq \eta < T \}.
\ee
\end{theorem}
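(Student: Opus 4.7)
The plan is to apply a Rouché-type argument with $f(z) := \W(z) - i/t$ and $g(z) := \m(z) - i/t$. By Proposition \ref{level_lines}, the eigenvalues of $G_t$ (for $t > 0$) are precisely the non-real zeros of $f$, so it suffices to show $|f - g| < |g|$ throughout $\mathscr{S}_{\zeta, T} \setminus \mathscr{E}_{t, \eps}$ with high probability: this strict inequality forbids $f$ from vanishing there, and the local law is the natural tool to control $|f - g| = |\W - \m|$.

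First, I would establish an a priori confinement of eigenvalues in a compact rectangle. Applying the eigenvalue equation $G_t x = \la x$ and taking imaginary parts yields $\Im \la = t |x^* v|^2 / \|x\|^2 \in [0, t]$ by Cauchy--Schwarz; combined with $\Re \la \in [\mu_1, \mu_N]$ from Proposition \ref{level_lines} and the standard edge bound $\max(|\mu_1|, |\mu_N|) < 3$ with high probability, every eigenvalue lies in $\{|E| < 3,\ 0 \leq \eta \leq T\} = \mathscr{R}_\zeta \cup \mathscr{S}_{\zeta, T}$. Eigenvalues in $\mathscr{R}_\zeta$ are automatically accounted for, so only those in $\mathscr{S}_{\zeta, T}$ need to be placed inside $\mathscr{E}_{t, \eps}$.

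The key analytic input is a $t$-uniform lower bound on $|g|$. Using $\m^2 + z\m + 1 = 0$ at both $z$ and $it^*$, subtracting, and factoring yields
$$\m(z) - \frac{i}{t} \ =\ \frac{-(i/t)\,(z - it^*)}{\m(z) + i/t + z}.$$
On the compact rectangle above, $|\m(z)|$ and $|z|$ are bounded by some $C = C(T)$, so $|\m(z) + i/t + z| \leq 2C + 1/t$, and hence
$$|g(z)| \ \geq\ \frac{|z - it^*|/t}{2C + 1/t} \ =\ \frac{|z - it^*|}{2Ct + 1} \ \geq\ \frac{|z - it^*|}{2CT + 1}.$$
For $z \in \mathscr{S}_{\zeta, T} \setminus \mathscr{E}_{t, \eps}$ one has $|z - it^*|^2 \geq N^{\eps}/(N\eta)$, so $|g(z)| \geq c(T)\,N^{\eps/2}/\sqrt{N\eta}$.

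Finally, applying Theorem \ref{thm:UILL} with a fixed parameter $\delta \in (0, \eps/2)$ gives, with high probability uniformly over $z \in \mathscr{S}_\zeta$,
$$|f(z) - g(z)| \ =\ |\W(z) - \m(z)| \ \leq\ \frac{N^{\delta}}{\sqrt{N\eta}\,(1 + \eta^2)^{3/4}} \ \leq\ \frac{N^{\delta}}{\sqrt{N\eta}},$$
which for $N$ large is strictly dominated by the previous lower bound on $|g(z)|$. Therefore $|f - g| < |g|$ on $\mathscr{S}_{\zeta, T} \setminus \mathscr{E}_{t, \eps}$, ruling out eigenvalues there. The hard part is the $t$-uniformity as $t \to 0^+$: both $i/t$ and $it^*$ diverge, and a naive triangle inequality for $\m(z) - i/t$ gives a useless bound; the factorization above is exactly what makes the singular $1/t$ in numerator and denominator cancel, producing a $t$-uniform estimate. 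The uniformity of the local law in $z$ (already built into Theorem \ref{thm:UILL}) then upgrades the pointwise argument to a statement simultaneously valid for all $t \in (0, T)$.
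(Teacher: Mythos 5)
Your proof is correct and follows essentially the same strategy as the paper: use Proposition \ref{level_lines} and a priori bounds to confine the spectrum to a compact region, apply the uniform isotropic local law to control $\W - \m$, and show $|\W - \m| < |\m - i/t|$ outside $\mathscr{E}_{t,\eps}$. The only cosmetic difference is in the $t$-uniform lower bound on $|\m(z) - i/t|$: the paper derives it from the Lipschitz-type estimate $|z_1 - z_2| \leq |\m(z_1) - \m(z_2)|\bigl(1 + 1/|\m(z_1)\m(z_2)|\bigr)$ coming from the inverse relation $z = -(1+\m^2)/\m$, while you obtain the same bound by subtracting the self-consistent equation $\m^2 + z\m + 1 = 0$ at $z$ and at $it^*$ and factoring; both routes exploit the same algebraic identity and yield a constant depending only on $T$.
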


\begin{corollary}\label{thm:hyperbolic_region}
For any $\eps >0$, with $\PP_H$-high probability, all trajectories up to time $T$ are in the domain
\be 
\mathscr{H}_{\eps} := 
\left\{ z = E+i\eta \in \C \ : \ \eta E^2 < N^{-1+\eps}, \ |E| <3, \ 0 \leq \eta < T \right\}
.
\ee
\end{corollary}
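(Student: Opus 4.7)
The corollary is essentially a direct reformulation of Theorem~\ref{thm:elliptic_region}. The plan is simply to discard the $(\eta-t^*)^2$ term in the definition of $\mathscr{E}_{t,\eps}$ and to absorb the trivial region $\mathscr{R}_\zeta$ by choosing $\zeta$ appropriately small.

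More concretely: fix $\eps>0$, choose $\zeta := \eps/2$, and apply Theorem~\ref{thm:elliptic_region} with this $\zeta$ and with parameter $\eps$, so that with $\PP_H$-high probability, every eigenvalue $\la_j(t)$ with $t\in(0,T)$ lies in $\mathscr{E}_{t,\eps}\cup\mathscr{R}_{\zeta}$. For $z = E+i\eta \in \mathscr{E}_{t,\eps}$, the defining inequality together with $(\eta - t^*)^2\ge 0$ gives $E^2 < N^\eps/(N\eta)$, hence $\eta E^2 < N^{-1+\eps}$; the remaining constraints $|E|<3$ and $0\le\eta<T$ are already part of the definition of $\mathscr{E}_{t,\eps}$. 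For $z = E+i\eta\in\mathscr{R}_\zeta$, one has $\eta<N^{-1+\zeta}$ and $|E|<3$, so $\eta E^2 < 9 N^{-1+\zeta} \le N^{-1+\eps}$ once $N$ is large enough (since $\zeta<\eps$), and $\eta < N^{-1+\zeta}<T$ is automatic. In either case $z\in\mathscr{H}_\eps$, which gives the claim.

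There is no real obstacle here; the only nuance is recognizing that the pointwise containment in $\mathscr{E}_{t,\eps}$ (which depends on $t$ through $t^*$) yields a $t$-independent hyperbolic bound once the vertical coordinate $t^*$ of the ellipse's centre is dropped. The slightly weaker region $\mathscr{H}_\eps$ captures precisely the envelope of the ellipses $\mathscr{E}_{t,\eps}$ as $t$ varies, together with the negligible near-real strip $\mathscr{R}_\zeta$ where the local law is not usable but which is itself already contained in $\mathscr{H}_\eps$.
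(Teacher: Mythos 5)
Your proposal is correct and follows essentially the same route as the paper: apply Theorem~\ref{thm:elliptic_region}, observe that dropping the nonnegative $(\eta-t^*)^2$ term shows $\bigcup_{0<t<T}\mathscr{E}_{t,\eps}\subset\mathscr{H}_\eps$, and choose $\zeta<\eps$ so that $\mathscr{R}_\zeta$ is also absorbed into $\mathscr{H}_\eps$. You are slightly more explicit than the paper about the constant factor ($E^2<9$) requiring $N$ to be large for the $\mathscr{R}_\zeta\subset\mathscr{H}_\eps$ inclusion, but the argument is the same.
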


\begin{proof}[Proof of Theorem \ref{thm:elliptic_region}]
It was proved in~\eqref{real_part_bound} that $\mu_1 \leq \Re \la_j(t) \leq \mu_N$ for all $j$ and $t>0$, and it holds with $\PP_H$-high probability that $-3< \mu_1 < \mu_N < 3$ (any number larger than $2$ would do), so that the bound $| \Re \la_j(t)| <3$ follows immediately. Similarly, with probability one, $0< \Im \la_j (t) < t < T$ for all $j$. \medskip

We now justify the main inequality, which results from the following observations. \\
First, by the uniform isotropic law (Theorem \ref{thm:UILL}) for any $\eps, \zeta>0$, with $\PP_H$-high probability
\be\label{up}
\forall z \in \mathscr{S}_\zeta, \quad
 |\W(z) - \mathfrak{m}(z)| < \frac{N^{\eps/4}}{\sqrt{N \eta}}.
\ee
Second, using \eqref{inverse_m}, we write:
\begin{equation}\label{lipstick}
|z_1 - z_2|
 = \left|\frac{1+\m(z_1)^2}{\m(z_1)} - \frac{1+\m(z_2)^2}{\m(z_2)} \right|
\leq |\m(z_1) - \m(z_2)| \left(1 + \frac{1}{|\m(z_1) \m(z_2)|} \right) .
\end{equation}
We apply this to $z_2 = it^*$, $z_1=E+i\eta $ with $|E|<3, t<T, \eta < 2T$ and conclude that in the bounded domain $\mathscr{S}_{\zeta, 2T}$,
\be\label{mi}
|z-it^*| < C_T \left| \mathfrak{m}(z) - \frac{i}{t} \right|
\ee
with a constant $C_T$ that only depends on $T$. 
Finally, for any $z \in \mathscr{S}_{\zeta,2T} \backslash \mathscr{E}_{t,\eps}$, we have (by definition of $ \mathscr{E}_{t,\eps}$)
\be\label{last}
|z-it^*| > \frac{N^{\eps/2}}{\sqrt{N \eta}},
\ee
and therefore the following sequence of inequalities holds for $z \in \mathscr{S}_{\zeta,2T} \backslash \mathscr{E}_{t,\eps}$, bringing together \eqref{up}, \eqref{last}, and \eqref{mi}.
\be\label{put_together}
|\W(z) - \mathfrak{m}(z)| 
< \frac{N^{\eps/4}}{\sqrt{N \eta}} 
< N^{-\eps/4} |z-it^*|
< \left| \mathfrak{m}(z) - \frac{i}{t} \right|, 
\ee
using $C_T\le N^{\eps/4}$.
As noticed above (see \eqref{inverse_m}), $\m(z)-i/t$ has only one zero at $z=it^*$, which is trivially outside $ \mathscr{S}_\zeta \backslash \mathscr{E}_{t,\eps}$. The conclusion is that $\W(z)$ does not take the value $i/t$ on $ \mathscr{S}_\zeta \backslash \mathscr{E}_{t,\eps}$, which is to say that all eigenvalues at time $t$ are in $\mathscr{E}_{t,\eps} \cup \mathscr{R}_{\zeta}$.
\end{proof}

\begin{proof}[Proof of Corollary \ref{thm:hyperbolic_region}]
We apply Theorem \ref{thm:elliptic_region} and note that
\be
\bigcup_{0<t<T} \mathscr{E}_{t,\eps} \subset \mathscr{H}_{\eps},
\ee
which proves that for any $\eps, \zeta >0$ all trajectories lie in $\mathscr{H}_{\eps} \cup \mathscr{R}_{\zeta}$; choosing $\zeta < \eps$ ensures that $\mathscr{R}_{\zeta} \subset \mathscr{H}_{\eps}$, so that with $\PP_H$-high probability all trajectories lie in $\mathscr{H}_{\eps}$. 
\end{proof}

\begin{figure}[h!]
\includegraphics[width=\textwidth]{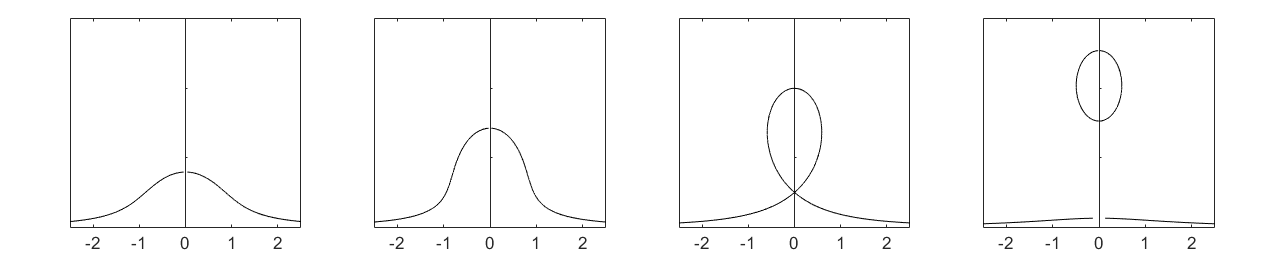}
\caption{Shape of the domain \eqref{elliptic_curve} when $t$ increases (picture not to scale). The first two images correspond to some $t_1 < t_2 <1+N^{-1/3-\eps}$; the last one corresponds to $t_4 > 1+N^{-1/3+\eps}$, when the outlier can be isolated by applying Rouché's theorem around the upper connected component.}
\label{elliptic_fig}
\end{figure}

The above argument relied directly on the strict inequality \eqref{put_together}. For the next result, we rely on Rouché's theorem in order to isolate one particular eigenvalue, which we can do as soon as the relevant domain has a bounded connected component in $\mathscr{S}_{\zeta,2T}$, as illustrated on Fig.~\ref{elliptic_fig}.

\begin{theorem}[Emergence of an outlier]\label{thm:the_outlier}
For any $\eps >0$, with $\PP_H$-high probability, at all times $t \in ( 1+ N^{-1/3+\eps}, T)$ the outlier is in the disk $D \left( it^*,
\frac{N^{\eps/4}}{\sqrt{N t^*}} \right)$, whereas all other eigenvalues  satisfy $\Im \la_j (t)<\frac{N^{\eps}}{N(t^*)^2}$. In particular they are well separated from the outlier\footnote{Recall that $t^*=t-1/t$, and so $t^* \sim 2(t-1)$ in any regime s.t. $t \sim 1$.}.
\end{theorem}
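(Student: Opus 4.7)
The plan is to refine the Rouché argument from Theorem \ref{thm:elliptic_region} so that it now isolates a single bounded connected component of $\mathscr{E}_{t,\eps}$ surrounding $it^*$. I begin with a deterministic geometric analysis: each horizontal slice $\{\Im z = \eta\} \cap \mathscr{E}_{t,\eps}$ is a symmetric interval in $E$, empty unless $(\eta - t^*)^2 < N^{\eps}/(N\eta)$, so the connected components of $\mathscr{E}_{t,\eps}$ correspond to the intervals of admissible $\eta$. Writing $\eta = t^* s$, the admissibility condition becomes $s(s-1)^2 < N^{\eps}/(N(t^*)^3)$; since $s \mapsto s(s-1)^2$ attains its maximum $4/27$ at $s=1/3$ on $(0,1)$, the set of admissible $\eta$ splits into two disjoint intervals precisely when $(t^*)^3 \gtrsim N^{\eps - 1}$, which is guaranteed by $t > 1 + N^{-1/3+\eps}$ via $t^* = t - 1/t \sim 2(t-1)$. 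A short asymptotic computation then yields an ``upper'' component $\mathscr{E}^{\text{up}}_{t,\eps}$ of diameter $O(N^{\eps/2}/\sqrt{Nt^*})$ around $it^*$, and a ``lower'' component contained in the slab $\{\Im z < N^{\eps}/(N(t^*)^2)\}$.

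With this decomposition in hand, I would apply Rouché's theorem to $f = \W - i/t$ and $g = \m - i/t$ on circles $\partial D(it^*, r)$. On any such circle, the Lipschitz estimate \eqref{mi} gives $|g(z)| \geq r/C_T$, while the uniform isotropic local law (Theorem \ref{thm:UILL}, applied with a smaller exponent $\eps'$) together with the easy check $\eta \asymp t^*$ on the circle (which holds because $r = \oo(t^*)$ in the relevant range) yields $|f - g| < N^{\eps'}/\sqrt{Nt^*}$ with high probability. Two applications suffice: once with $r = N^{\eps}/\sqrt{Nt^*}$, a radius comfortably enclosing $\mathscr{E}^{\text{up}}_{t,\eps}$, to see that $f$ has a unique zero in the upper component; and once with $r = N^{\eps/4}/\sqrt{Nt^*}$ to localize that zero in the disk claimed by the theorem. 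Both inequalities balance because $C_T = \OO(1)$ and we may take $\eps' = \eps/8$, and in each case the resulting unique zero of $f$ is an eigenvalue of $G_t$ by Proposition \ref{level_lines}, namely the outlier.

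The statement for the remaining eigenvalues then follows from Theorem \ref{thm:elliptic_region} applied with some $\zeta < \eps$: they must lie in $\mathscr{E}_{t,\eps} \cup \mathscr{R}_{\zeta}$ and, the upper component being already occupied by the outlier, they are in fact in $\mathscr{E}^{\text{down}}_{t,\eps} \cup \mathscr{R}_{\zeta} \subset \{\Im z < N^{\eps}/(N(t^*)^2)\}$ (the inclusion of $\mathscr{R}_{\zeta}$ is a crude comparison of powers of $N$, using $t^* \leq T$). I expect the main content of the argument to lie in the geometric disconnection step: it is precisely there that the isotropic-law error $N^{\eps}/\sqrt{N\eta}$ is converted into the critical BBP-type scale $t-1 \sim N^{-1/3}$, through the maximum $4/27$ of the universal cubic $s(s-1)^2$. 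The two Rouché applications are then routine variations of the argument already carried out for Theorem \ref{thm:elliptic_region}.
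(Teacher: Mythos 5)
Your proposal is correct and follows essentially the same route as the paper: apply the isotropic local law plus Rouché's theorem on a circle around $it^*$, using that $\m-i/t$ has its unique zero there, and then confine the remaining eigenvalues via the two-component structure of the elliptic domain. The one genuine added value is your explicit change of variables $\eta=t^*s$, which reduces the disconnection question to the universal cubic $s(s-1)^2$ with maximum $4/27$ at $s=1/3$; the paper merely asserts the split into two components as a ``direct calculation,'' so your computation makes the emergence of the $N^{-1/3}$ threshold more transparent.

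Two small technical remarks. First, the paper carries the geometry out for $\mathscr{E}_{t,\eps'}$ with $\eps'<\eps/2$, so that the upper component already fits inside the single disk $D(it^*, N^{\eps/4}/\sqrt{Nt^*})$ and one Rouché application suffices; your version analyses $\mathscr{E}_{t,\eps}$ (whose upper component has diameter $\sim N^{\eps/2}/\sqrt{Nt^*}$, too large for that disk), which is why you are forced into two Rouché applications. Both work, but the paper's bookkeeping is leaner. Second, your claim $\mathscr{E}^{\text{down}}_{t,\eps}\subset\{\Im z<N^{\eps}/(N(t^*)^2)\}$ is not literally true: if $s_1$ denotes the small root of $s(s-1)^2=N^{\eps}/(N(t^*)^3)$, then $(s_1-1)^2<1$ forces $s_1> N^{\eps}/(N(t^*)^3)$, so the lower component sticks slightly above your stated slab. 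This is a constant-factor slip, harmless once you run the argument with a strictly smaller exponent $\eps'<\eps$ (as the paper does) and absorb the constant into $N^{\eps-\eps'}$, but it should be made explicit.
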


\begin{proof}
We consider some $\eps'>0$ such that $\eps'< \eps/2$, and the domain $\mathscr{E}_{t,\eps'}$ similarly as in \eqref{elliptic_curve}.
We go through the same steps as in the proof of Theorem \ref{thm:elliptic_region} and note that the inequalities \eqref{up}, \eqref{mi}, \eqref{last}, and therefore also \eqref{put_together} hold on $\mathscr{S}_{\zeta,2T} \backslash \mathscr{E}_{t, \eps'}$, allowing us to invoke Rouché on any Jordan curve inside this domain. \\ \\
We further note that, for $t> 1+ N^{-1/3+\eps}$ and $N$ large enough, the domain $\mathscr{E}_{t, \eps'}$ has two connected components, which is a direct calculation. Let us prove that one connected component lies in the disk $D \left( it^*,
\frac{N^{\eps/2}}{\sqrt{N t^*}} \right)$. First, $it^* \in \mathscr{E}_{t, \eps'}$ by inspection. Then, note that $t>1+N^{-1/3+\eps}$ implies $t^* > N^{-1/3+\eps}$, so that for $N$ large enough,
\be
\frac{N^{\eps/4}}{\sqrt{Nt^*}} < \frac13 {t^*},
\ee
which implies that for any $z=E+i\eta \in \partial D(it^*, \frac{N^{\eps/4}}{\sqrt{N t^*}})$,
\be
\eta \geq t^* - \frac{N^{\eps/4}}{\sqrt{Nt^*}} > \frac23 t^*
\ee
and so, for any point on that disk,
\be
E^2 + (\eta-t^*)^2 = \frac{N^{\eps/2}}{Nt^*} > \frac{N^{\eps'}}{N \eta}.
\ee
This and other direct considerations show that $\partial D(it^*, \frac{N^{\eps/4}}{\sqrt{N t^*}}) \subset \mathscr{S}_{\zeta,2T} \backslash \mathscr{E}_{t, \eps'}$. We can apply Rouché on this circle. Owing to the fact that $\m - i/t$ has only one zero at $it^*$, this proves the first statement about the outlier. \\ \\
The second statement follows from checking that the second connected component is below height $\frac{N^{\eps}}{N (t^*)^2}$. First, as $t^*> N^{-1/3+\eps}$, we have
\be
t^* - \frac{N^{\eps}}{N (t^*)^2} > \frac23 t^*.
\ee
Therefore, for any $z=E+i \frac{N^{\eps}}{N (t^*)^2}$,
\be
\eta E^2 + \eta (\eta -t^*)^2 > \frac49 (t^*)^2 \frac{N^{\eps}}{N (t^*)^2} > \frac{N^{\eps'}}{N}
\ee
for any $\eps'<\eps$. This is valid outside $\mathscr{R}_{\zeta}$ for some $\zeta>0$; the second statement follows by choosing $\zeta< \eps$.
\end{proof}

We finally present two complements of our main theorem, before and after the timescale at which the outlier can be isolated. The bounds we obtain for small $t$, up to slightly below the relevant $1+N^{-1/3}$ timescale, are given in Proposition \ref{prop:small_t}, whereas the bounds for $t \geq T$ 
are given in Proposition \ref{prop:large_t}. 

\begin{proposition}[Small $t$ bounds]\label{prop:small_t}
For any $\zeta,\eps>0$, with $\PP_H$-high probability, at all times $t < 1+N^{-1/3-\eps} $, all eigenvalues satisfy $\Im \la_j(t) < N^{-1/3 + \eps}$. Moreover, for $t < 1- N^{-1/3 + \eps}$,
\be\label{bound2}
\forall j \quad \Im \la_j(t) < \max \left(\frac{N^{\eps}}{N (t^*)^2},\frac{N^{\zeta}}{N} \right).
\ee
\end{proposition}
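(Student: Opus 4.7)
Both statements are consequences of Theorem \ref{thm:elliptic_region} by deterministic calculus; no further probabilistic input is needed. Fix an auxiliary exponent $\eps' \in (0,\eps)$. For any $\tilde\zeta > 0$, Theorem \ref{thm:elliptic_region} (with its parameters replaced by $\eps'$ and $\tilde\zeta$) yields, with $\PP_H$-high probability, that every eigenvalue $\la_j(t)$ with $t\in(0,T)$ lies in $\mathscr{E}_{t,\eps'}\cup\mathscr{R}_{\tilde\zeta}$. Discarding the non-negative term $E^2$ from the defining inequality of $\mathscr{E}_{t,\eps'}$, the height $\eta := \Im\la_j(t)$ of any eigenvalue on the elliptic piece satisfies the cubic-type inequality
\be\label{cubic_sketch}
\eta\,(\eta-t^*)^2 \leq \frac{N^{\eps'}}{N}.
\ee
Both claims then reduce to analyzing \eqref{cubic_sketch} in the relevant range of $t^*=t-1/t$.

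\textit{First bound.} Apply the above with some $\tilde\zeta<\eps$, so that $\mathscr{R}_{\tilde\zeta}\subset\{0\leq\eta<N^{-1+\tilde\zeta}\}\subset\{\eta<N^{-1/3+\eps}\}$. Assume $t<1+N^{-1/3-\eps}$, hence $t^*\leq 2N^{-1/3-\eps}$ for $N$ large. Suppose for contradiction that $\eta\geq N^{-1/3+\eps}$ for some eigenvalue on the elliptic piece. In both cases $t^*\leq 0$ and $0<t^*\leq 2N^{-1/3-\eps}$ one has $t^*\leq \eta/2$ for $N$ large, hence $\eta-t^*\geq \eta/2$, and \eqref{cubic_sketch} forces $\eta^3\leq 4N^{\eps'-1}$, contradicting $\eta^3\geq N^{-1+3\eps}$ as soon as $\eps'<3\eps$ (which is implied by $\eps'<\eps$). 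Thus $\Im\la_j(t)<N^{-1/3+\eps}$ on the elliptic piece as well as on $\mathscr{R}_{\tilde\zeta}$.

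\textit{Second bound.} Apply the above with $\tilde\zeta=\zeta$ as in the statement. Assume $t<1-N^{-1/3+\eps}$, so that $t^*<-N^{-1/3+\eps}<0$. Then for any $\eta>0$ one has $\eta-t^*=\eta+|t^*|\geq|t^*|$, and \eqref{cubic_sketch} immediately yields
\be
\eta \leq \frac{N^{\eps'}}{N\,(t^*)^2} \leq \frac{N^{\eps}}{N\,(t^*)^2}
\ee
on the elliptic piece. On $\mathscr{R}_\zeta$ one has the trivial bound $\eta<N^{-1+\zeta}=N^{\zeta}/N$, which is precisely the other entry of the $\max$ in \eqref{bound2}.

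I do not expect any substantial obstacle: all the probabilistic work has already been invested in the uniform isotropic local law and Theorem \ref{thm:elliptic_region}, and what remains is elementary bookkeeping of the interplay between the exponents $\eps,\eps',\zeta,\tilde\zeta$. The only place where one has to be slightly careful is the case distinction on the sign of $t^*$ in the first bound, where one must ensure that the positive values $t^*\leq 2N^{-1/3-\eps}$ are genuinely dominated by the hypothetical $\eta\geq N^{-1/3+\eps}$; this is a direct comparison of exponents.
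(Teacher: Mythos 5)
Your proof is correct and takes essentially the same route as the paper: both statements are derived from Theorem~\ref{thm:elliptic_region} by discarding the term $E^2$ from the defining inequality of $\mathscr{E}_{t,\eps'}$, which gives $\eta(\eta-t^*)^2 < N^{\eps'-1}$, followed by elementary casework on the sign and size of $t^*$. Your treatment of the first bound in the regime $1\le t<1+N^{-1/3-\eps}$ (via $t^*\le \eta/2$, hence $\eta-t^*\ge\eta/2$) makes explicit the ``calculus exercise'' the paper leaves to the reader. For the second bound you argue directly: since $t^*<0$, the factor $(\eta-t^*)^2\ge(t^*)^2$ gives $\eta< N^{\eps'}/(N(t^*)^2)$ pointwise on $\mathscr{E}_{t,\eps'}$, with the $N^{\zeta}/N$ term covering $\mathscr{R}_\zeta$. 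This is algebraically more direct than the paper's argument, which instead invokes connectedness of $\mathscr{E}_{t,\eps}$ and the fact that the horizontal line $\{\eta=N^{\eps}/(N(t^*)^2)\}$ is disjoint from it; your version avoids the topological detour and is arguably cleaner, though the underlying estimate (comparing the vertical displacement to $|t^*|$) is the same.
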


\begin{proof}
These bounds are direct consequences of Theorem \ref{thm:elliptic_region} applied for a well chosen $\eps'$. For instance, if $t < 1$, then $t^* <0$ and the inequality in \eqref{elliptic_curve} implies
$ \eta^3 < \eta (\eta-t^*)^2 < N^{-1+\eps'},$
from which the bound $\Im \la_j < N^{-1/3+\eps}$ follows if $\eps'<\eps$. It is a calculus exercise to check that this inequality still holds as long as $1 \leq t < 1+N^{-1/3-\eps}$, when choosing an appropriate $\eps'$. \\

If $t< 1-N^{-1/3+\eps}$, it can be directly checked that the domain \eqref{elliptic_curve} is connected and contains the origin, so that any line that is not contained in it actually bounds it. Together with the fact that in that regime,
$$
\frac{N^{\eps}}{N (t^*)^2} \ll |t^*|
$$
one can check that the line $\{ \eta = \frac{N^{\eps}}{N (t^*)^2} \}$ is not in $\mathscr{E}_{t, \eps}$, and therefore all eigevalues are below this threshold, or in $\mathscr{R}_{\zeta}$.
\end{proof}

\begin{proposition}[Large $t$ bounds]\label{prop:large_t}
For any $\zeta>\eps>0$, with $\PP_H$-high probability, at all times $t \in [T,N^{99}]$, the outlier is in the 
small disk $D(it^*, N^{-1/2+\eps})$, while all other eigenvalues are in $\mathscr{R}_{\zeta}$.
\end{proposition}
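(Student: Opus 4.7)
The plan is to follow the Rouch\'e template of Theorem~\ref{thm:the_outlier}, with $f(z) := \W(z) - i/t$ and $g(z) := \m(z) - i/t$. For any $t \in [T, N^{99}]$ the unique zero $it^*$ of $g$ satisfies $t^* = t - 1/t \in [T - 1/T, N^{99}]$, and the disk $D(it^*, N^{-1/2+\eps})$ sits safely inside $\mathscr{S}_\zeta$ (its radius being much smaller than $t^*$). I will fix an auxiliary exponent $\eps' \in (0, \eps/2)$, invoke Theorem~\ref{thm:UILL} with $\eps'$, and work on the $\PP_H$-high probability event on which $|\W(z) - \m(z)| < N^{\eps'}/[\sqrt{N\eta}(1+\eta^2)^{3/4}]$ holds uniformly on $\mathscr{S}_\zeta$ and $|\mu_1|,|\mu_N|<3$; the rest of the argument is then deterministic and simultaneous in all $t \in [T, N^{99}]$.

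The main quantitative input is a lower bound on $|g|$. Applying \eqref{lipstick} with $z_1 = z$ and $z_2 = it^*$ (so $\m(z_2) = i/t$), and using that $|\m(z)|$ is bounded below by a positive constant on $\{|E|<3,\,\eta \geq 0\}$ while $|\m(z)| \asymp 1/|z|$ for large $|z|$, one obtains
\begin{equation}
\left| \m(z) - \frac{i}{t} \right| \gtrsim \frac{|z-it^*|}{(1+\eta)\,t}, \qquad z = E+i\eta \in \mathscr{S}_\zeta,\ |E|<3.
\end{equation}
To isolate the outlier I apply Rouch\'e on $\partial D(it^*, N^{-1/2+\eps})$: from $\m^2 + z\m + 1 = 0$ one computes $\m'(it^*) = -1/(t^2+1)$, so linearization gives $|g(z)| \asymp N^{-1/2+\eps}/t^2$ on that circle, while with $\eta \sim t^*$ the UILL error is at most $N^{\eps'}/(\sqrt{N}\,t^2)$. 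Since $\eps' < \eps$, the comparison $|f-g| < |g|$ holds on this circle and $f$ has exactly one zero in the disk.

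To rule out other zeros of $f$ in $\mathscr{S}_\zeta \setminus D(it^*, N^{-1/2+\eps})$, I will verify the same strict inequality there by splitting according to the size of $\eta$: for $\eta \lesssim 1$, one has $|z-it^*| \gtrsim t$ and hence $|g|$ bounded below by a positive constant, which dominates the UILL error (at most $N^{\eps'-\zeta/2}$, negligible since $\zeta > \eps > \eps'$); for $\eta \gtrsim 1$, the inequality $|g| \gtrsim |z-it^*|/(\eta\, t) \geq N^{-1/2+\eps}/(\eta\, t)$ is to be compared with $N^{\eps'}/(\sqrt{N}\,\eta^2)$, and a further subsplit on whether $\eta \leq t$ or $\eta > t$ (in the latter case using $|z-it^*| \asymp \eta$) closes the argument.

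The main obstacle is maintaining this comparison uniformly over $t \in [T, N^{99}]$ and $\eta \in [N^{-1+\zeta}, N^{100}]$, the most delicate window being when $\eta$ is comparable to $t$, where both sides of the inequality shrink at similar rates and the $(1+\eta^2)^{3/4}$ factor in the UILL bound is essential. Once the strict inequality holds everywhere outside the small disk, $\W - i/t$ has exactly one zero in $\mathscr{S}_\zeta$, the outlier inside $D(it^*, N^{-1/2+\eps})$; the remaining $N-1$ eigenvalues lie in the upper half-plane with real part in $(-3,3)$ by \eqref{real_part_bound}, and since they avoid $\mathscr{S}_\zeta$, they must all belong to $\mathscr{R}_\zeta$, as claimed.
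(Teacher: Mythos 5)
Your approach is essentially the same as the paper's: Rouch\'e with $f=\W-i/t$ and $g=\m-i/t$ on the small circle $\partial D(it^*,N^{-1/2+\eps})$, a lower bound on $|g|$ derived from \eqref{lipstick}, and the isotropic local law used with the $(1+\eta^2)^{3/4}$ decay that becomes available far from the real axis. Your linearization $\m'(it^*)=-1/(t^2+1)$ is a valid (and clean) alternative to the paper's direct Lipschitz estimate, which is carried out only in the strip $t/2<\eta<2t$; you attempt a more explicit sweep over the full range $\eta\in[N^{-1+\zeta},N^{100}]$, which the paper dispatches with a brief reference to earlier arguments.

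There is, however, a small hole in your sweep. In the regime $1\lesssim\eta\le t$ you lower-bound $|z-it^*|$ only by the disk radius $N^{-1/2+\eps}$, giving $|g|\gtrsim N^{-1/2+\eps}/(\eta t)$, and compare this with the local-law error $N^{\eps'}/(\sqrt{N}\eta^2)$. The ratio of these two quantities is $N^{\eps-\eps'}\eta/t$, which is $<1$ whenever $\eta\ll tN^{-(\eps-\eps')}$, so the comparison fails there. The repair is routine: for $\eta\le t^*/2$ use the trivial bound $|z-it^*|\ge|t^*-\eta|\gtrsim t$, giving $|g|\gtrsim t/(\eta t)=1/\eta$, which beats $N^{\eps'}/(\sqrt{N}\eta^2)$ by a factor $\sqrt{N}\eta/N^{\eps'}\gg 1$ as soon as $\eta\gtrsim 1$. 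In other words, the disk-complement bound $|z-it^*|\ge N^{-1/2+\eps}$ is both needed and sharp only in the window $\eta\asymp t^*$ --- exactly the ``delicate window'' you yourself identify --- while elsewhere the geometric distance to $it^*$ is much larger and should be used instead. With that correction the argument closes and matches the statement.
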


\begin{proof}
The only difference with the previous proofs is the change of domain, from a bounded one close to the real line, 
to a domain 
far away from the real line. There are two 
consequences of this change: on the one hand, the isotropic law gives a better bound, but on the other hand the inequality \eqref{mi} 
has to be replaced by a weaker one. For this, we apply the inequality \eqref{lipstick} this to $z_2 = it^*$, $z_1=E+i\eta$ with $\eta \in [t/2,2t]$,
\be 
|z - it^*| < \left|\m(z) - \frac{i}{t}\right| \left(1 + \frac{t}{|\m(z)|} \right)
< N^{\eps/2} \eta^2 \left|\m(z) - \frac{i}{t}\right|.
\ee
In the domain
\be 
\mathscr{F}_{t, \eps} := \{ |E|<3, \ t/2 < \eta < 2t, \ |z-it^*| \geq  N^{-1/2+\eps} \},
\ee
which is a rectangle with a small disk removed, we have the sequence of inequalities:
\be 
|\W(z) - \mathfrak{m}(z)| 
< \frac{N^{\eps/2}}{\eta^2 \sqrt{N}} 
\leq N^{-\eps/2} \frac{|z-it^*|}{\eta^2}
< \left| \mathfrak{m}(z) - \frac{i}{t} \right|
\ee
and so we can apply Rouché on $\partial D(it^*, N^{-1/2+\eps})$. We recall that the comparison function $\m - i/t$ has exactly one root, which is at $it^*$. This proves that $\W-i/t$ also has exactly one root inside the disk, which is the outlier. As for the other eigenvalues, the argument from Theorem \ref{thm:the_outlier} works the same; as we assume $\zeta> \eps$, the region $\mathscr{R}_{\zeta}$ contains the second connected component of \eqref{elliptic_curve}.
\end{proof}

\begin{bibdiv}
\begin{biblist}

\bib{RMT_handbook}{book}{
  title={The Oxford Handbook of Random Matrix Theory},
  author={Akemann, G.},
  author={Baik, J.},
  author={Di Francesco, P.},
  year={2011},
  publisher={Oxford University Press}
}

\bib{AltErdoesKruegerSchroeder}{article}{
  title={Correlated random matrices: band rigidity and edge universality},
  author={Alt, J.},
  author={Erd{\H{o}}s, L.},
  author={Kr{\"u}ger, T.},
  author={Schr{\"o}der, D.},
  journal={Ann. Probab.},
  volume={48},
  number={2},
  pages={963--1001},
  year={2020},
  publisher={Institute of Mathematical Statistics}
}

\bib{BBP_paper}{article}{
  title={Phase transition of the largest eigenvalue for nonnull complex sample covariance matrices},
  author={Baik, J.},
  author={Ben Arous, G.},
  author={P{\'e}ch{\'e}, S.},
  journal={Ann. Probab.},
  volume={33},
  number={5},
  pages={1643--1697},
  year={2005},
  publisher={Institute of Mathematical Statistics}
}

\bib{ErdoesKruegerSchroeder}{article}{
  title={Random matrices with slow correlation decay},
  author={Erd{\H{o}}s, L.},
  author={Kr{\"u}ger, T.},
  author={Schr{\"o}der, D.},
  journal={Forum Math. Sigma},
  volume={7},
  year={2019},
  organization={Cambridge University Press}
}

\bib{ErdoesSchroeder2016}{article}{
  title={Fluctuations of functions of Wigner matrices},
  author={Erd{\H{o}}s, L.},
  author={Schr{\"o}der, D.},
  journal={Electron. Commun. Probab.},
  volume={21},
  pages={1--15},
  year={2016},
  publisher={Institute of Mathematical Statistics and Bernoulli Society}
}

\bib{Forrester_Review}{article}{
  title={Rank 1 perturbations in random matrix theory - a review of exact results},
  author={Forrester, P. J.},
  eprint={arXiv:2201.00324},
  year={2022}
}


\bib{FyodorovKhoruzhenko}{article}{
  title={Systematic analytical approach to correlation functions of resonances in quantum chaotic scattering},
  author={Fyodorov, Y. V.},
  author={Khoruzhenko, B. A.},
  journal={Phys. Rev. Lett.},
  volume={83},
  number={1},
  pages={65},
  year={1999},
  publisher={APS}
}

\bib{FyodorovGUE}{article}{
  title={Extreme eigenvalues and the emerging outlier in rank-one non-Hermitian deformations of the Gaussian unitary ensemble},
  author={Fyodorov, Y. V.},
  author={Khoruzhenko, B. A.},
  author={Poplavskyi, M.},
  journal={Entropy},
  volume={25},
  number={1},
  pages={74},
  year={2023},
  publisher={Multidisciplinary Digital Publishing Institute}
}

\bib{FyodorovMehlig}{article}{
  title={Statistics of resonances and nonorthogonal eigenfunctions in a model for single-channel chaotic scattering},
  author={Fyodorov, Y. V.},
  author={Mehlig, B.},
  journal={Phys. Rev. E},
  volume={66},
  number={4},
  year={2002},
  publisher={APS}
}

\bib{FyodorovSommers1996}{article}{
  title={Statistics of S-matrix poles in few-channel chaotic scattering: Crossover from isolated to overlapping resonances},
  author={Fyodorov, Y.V.},
  author={Sommers, H.J.},
  journal={JETP Lett.},
  volume={63},
  number={12},
  pages={1026--1030},
  year={1996},
  publisher={Springer}
}

\bib{FyodorovSommers1997}{article}{
  title={Statistics of resonance poles, phase shifts and time delays in quantum chaotic scattering: Random matrix approach for systems with broken time-reversal invariance},
  author={Fyodorov, Y.V.},
  author={Sommers, H.J.},
  journal={J. Math. Phys.},
  volume={38},
  number={4},
  pages={1918--1981},
  year={1997},
  publisher={American Institute of Physics}
}

\bib{FyodorovSommers}{article}{
  title={Random matrices close to Hermitian or unitary: overview of methods and results},
  author={Fyodorov, Y.V.},
  author={Sommers, H.J.},
  journal={J. Phys. A Math.},
  volume={36},
  number={12},
  pages={3303},
  year={2003},
  publisher={IOP Publishing}
}

\bib{HeKnowles2016}{article}{
  title={Mesoscopic eigenvalue statistics of Wigner matrices},
  author={He, Y.},
  author={Knowles, A.},
  journal={Ann. Appl. Probab.},
  volume={27},
  number={3},
  pages={1510-1550},
  year={2017}
}

\bib{KnowlesYin2013}{article}{
  title={The isotropic semicircle law and deformation of Wigner matrices},
  author={Knowles, A.},
  author={Yin, J.},
  journal={Commun. Pure Appl. Math.},
  volume={66},
  number={11},
  pages={1663--1749},
  year={2013},
  publisher={Wiley Online Library}
}

\bib{ORourkeWood2017}{article}{
  title={Spectra of nearly Hermitian random matrices},
  author={O’Rourke, S.},
  author={Matchett Wood, P.},
  journal={Ann. Henri Poincar{\'e}},
  volume={53},
  number={3},
  pages={1241--1279},
  year={2017},
}

\bib{Peche_BBP_GUE}{article}{
  title={The largest eigenvalue of small rank perturbations of Hermitian random matrices},
  author={P{\'e}ch{\'e}, S.},
  journal={Prob. Theory Relat. Fields},
  volume={134},
  number={1},
  pages={127--173},
  year={2006},
  publisher={Springer}
}

\bib{Rochet2017}{article}{
  title={Complex outliers of Hermitian random matrices},
  author={Rochet, J.},
  journal={J. Theor. Probab.},
  volume={30},
  number={4},
  pages={1624--1654},
  year={2017},
  publisher={Springer}
}

\bib{Shcherbina2021}{article}{
  title={Finite-rank complex deformations of random band matrices: sigma-model approximation},
  author={Shcherbina, M.},
  author={Shcherbina, T.},
  eprint={arXiv:2112.04455},
  year={2021}
}

\bib{Smolyarenko2003}{article}{
  title={Parametric spectral statistics in unitary random matrix ensembles: from distribution functions to intra-level correlations},
  author={Smolyarenko, I.E.},
  author={Simons, B.D.},
  journal={J. Phys. A Math.},
  volume={36},
  number={12},
  pages={3551},
  year={2003},
  publisher={IOP Publishing}
}

\end{biblist}
\end{bibdiv}

\end{document}